\definecolor{col1}{rgb}{0.75,0,0}
\definecolor{col2}{rgb}{0,0,0.75}
\newcommand*{\ISet}{\mathcal{I}}
\renewcommand*{\vec}[1]{\mathbf{#1}}
\newcommand*{\defeq}{\mathrel{\mathop:}=}
\newcommand*{\tet}[1]{\mathbf{#1}}
\newcommand*{\tri}[1]{\tet{#1}}
\newtheorem{theorem}{Theorem}
\newtheorem{proposition}[theorem]{Proposition}
\newtheorem{corollary}[theorem]{Corollary}
\theoremstyle{definition}
\newtheorem{definition}[theorem]{Definition}
\title{Orthologic Tetrahedra with Intersecting Edges}
\author{Hans-Peter Schröcker}
\address{Hans-Peter Schröcker, Unit Geometry and CAD, University
  Innsbruck, Technikerstraße 13, A6020 Innsbruck, Austria}
\urladdr{http://geometrie.uibk.ac.at/schroecker/}
\keywords{orthologic tetrahedra,
  orthosecting tetrahedra,
  isogonal conjugate}
\subjclass[2010]{51M04}
\begin{document}
\twocolumn[%
\begin{abstract}
  % flatex input: [abstract.tex]
Two tetrahedra are called orthologic if the lines through vertices of
one and perpendicular to corresponding faces of the other are
intersecting. This is equivalent to the orthogonality of
non-corresponding edges. We prove that the additional assumption of
intersecting non-corresponding edges (``orthosecting tetrahedra'')
implies that the six intersection points lie on a sphere. To a given
tetrahedron there exists generally a one-parametric family of
orthosecting tetrahedra. The orthographic projection of the locus of
one vertex onto the corresponding face plane of the given tetrahedron
is a curve which remains fixed under isogonal conjugation. This allows
the construction of pairs of conjugate orthosecting tetrahedra to a
given tetrahedron.

%%% Local Variables: 
%%% mode: latex
%%% eval: (ispell-change-dictionary "english")
%%% eval: (flyspell-mode t)
%%% End: 

% flatex input end: [abstract.tex]

%
\end{abstract}
\maketitle]

% flatex input: [body.tex]
\section{Introduction}
\label{sec:introduction}

Ever since the introduction of orthologic triangles and tetrahedra by
J.~Steiner in 1827 \cite{steiner27:_lehrsaetze} these curious pairs
have attracted researchers in elementary geometry. The characterizing
property of orthologic tetrahedra is concurrency of the straight lines
through vertices of one tetrahedron and perpendicular to corresponding
faces of the second. Alternatively, one can say that non-corresponding
edges are orthogonal. Proofs of fundamental properties can be found in
\cite{neuberg84:_memoire_tetraedre} and
\cite{neuberg07:_orthologische_tetraeder}. Quite a few results are
known on special families of orthologic triangles and tetrahedra. See
for example \cite{servais23:_trois_tetraedres,%
  thebault52:_perspective_orthologic_triangles,%
  mandan79:_orthologic_desargues_figure,%
  mandan79:_semi_bilogic_bilogic_tetrahedra} for more information on
orthologic tetrahedra (or triangles) which are also perspective or
\cite{goormaghtigh29:_orthopole_theorem} for a generalization of a
statement on families of orthologic triangles related to orthopoles.

In this article we are concerned with \emph{orthosecting
  tetrahedra}\,---\,orthologic tetrahedra such that
non-cor\-re\-spon\-ding edges intersect orthogonally. The concept as
well as a few basic results will be introduced in
Section~\ref{sec:preliminaries}. In
Section~\ref{sec:intersection-points} we show that the six
intersection points of non-corresponding edges necessarily lie on a
sphere (or a plane). While the computation of orthosecting pairs
requires, in general, the solution of a system of algebraic equations,
conjugate orthosecting tetrahedra can be constructed from a given
orthosecting pair. This is the topic of
Section~\ref{sec:solution-family}. Our treatment of the subject is of
elementary nature. The main ingredients in the proofs come from
descriptive geometry and triangle geometry.

A few words on notation: By $A_1A_2A_3$ we denote the triangle with
vertices $A_1$, $A_2$, and $A_3$, by $A_1A_2A_3A_4$ the tetrahedron
with vertices $A_1$, $A_2$, $A_3$, and $A_4$. The line spanned by two
points $A_1$ and $A_2$ is $A_1 \vee A_2$, the plane spanned by three
points $A_1$, $A_2$, and $A_3$ is $A_1 \vee A_2 \vee
A_3$. Furthermore, $\ISet_n$ denotes the set of all $n$-tuples with
pairwise different entries taken from the set $\{1,\ldots,n\}$.

\section{Preliminaries}
\label{sec:preliminaries}

Two triangles $A_1A_2A_3$ and $B_1B_2B_3$ are called
\emph{orthologic}, if the three lines
\begin{equation}
  \label{eq:1}
  a_i\colon A_i \in a_i,\
  a_i \perp B_j \vee B_k;\quad
  (i,j,k) \in \ISet_3
\end{equation}
intersect in a point $O_A$, the \emph{orthology center} of $A_1A_2A_3$
with respect to $B_1B_2B_3$. In this case, also the lines
\begin{equation}
  \label{eq:2}
  b_i\colon B_i \in b_i,\
  b_i \perp A_j \vee A_k;\quad
  (i,j,k) \in \ISet_3
\end{equation}
intersect in a point $O_B$, the orthology center of $B_1B_2B_3$ with
respect to $A_1A_2A_3$. The concept of orthologic tetrahedra is
similar. Two tetrahedra $\tet{A} = A_1A_2A_3A_4$ and $\tet{B} =
B_1B_2B_3B_4$ are called \emph{orthologic}, if the four lines
\begin{equation}
  \label{eq:3}
  a_i\colon A_i \in a_i,\
  a_i \perp B_j \vee B_k \vee B_l;\quad
  (i,j,k,l) \in \ISet_4
\end{equation}
intersect in a point $O_A$, the orthology center of $\tet{A}$ with
respect to $\tet{B}$. In this case, also the lines
\begin{equation}
  \label{eq:4}
  b_i\colon B_i \in b_i,\
  b_i \perp A_j \vee A_k \vee A_l;\quad
  (i,j,k,l) \in \ISet_4
\end{equation}
intersect in a $O_B$, the orthology center of $\tet{B}$ with respect
to $\tet{A}$. Orthologic triangles and tetrahedra have been introduced
by J.~Steiner in \cite{steiner27:_lehrsaetze}; proofs of fundamental
properties can be found in
\cite{neuberg84:_memoire_tetraedre,neuberg07:_orthologische_tetraeder}
or
\cite[pp.~173--174]{altshiller-court64:_modern_pure_solid_geometry}.

The symmetry of the two tetrahedra in the definition of orthology is a
consequence of the following alternative characterization of
orthologic tetrahedra. It is well-known but we give a proof which
introduces concepts and techniques that will frequently be employed
throughout this paper.

\begin{proposition}
  \label{prop:1}
  The two tetrahedra $\tet{A}$ and $\tet{B}$ are orthologic if and
  only if non-corresponding edges are orthogonal:
  \begin{equation}
    \label{eq:5}
    A_i \vee A_j \perp B_k \vee B_l,\quad
    (i,j,k,l) \in \ISet_4.
  \end{equation}
\end{proposition}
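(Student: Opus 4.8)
The plan is to pass to position vectors in $\RSet^3$ (writing $A_i$, $B_i$ both for the points and for their position vectors), so that the orthogonality \eqref{eq:5} reads $(A_i-A_j)\cdot(B_k-B_l)=0$, and to denote by $\vec n_i$ a normal vector of the face $B_j\vee B_k\vee B_l$ of $\tet B$ opposite to $B_i$. Then the line $a_i$ of \eqref{eq:3} is $\{A_i+t\,\vec n_i:t\in\RSet\}$, and $O_A\in a_i$ is equivalent to saying that $O_A-A_i$ is orthogonal to every edge of that face.

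The implication ``$\tet A,\tet B$ orthologic $\Rightarrow$ \eqref{eq:5}'' is then immediate: fix $(i,j,k,l)\in\ISet_4$; the faces opposite $B_i$ and opposite $B_j$ both contain the edge $B_k\vee B_l$, so $O_A\in a_i\cap a_j$ forces $(O_A-A_i)\cdot(B_k-B_l)=0=(O_A-A_j)\cdot(B_k-B_l)$, and subtracting gives $(A_i-A_j)\cdot(B_k-B_l)=0$. The same computation with $\tet A$ and $\tet B$ interchanged and $O_B$ in place of $O_A$ accounts for the symmetry remarked above.

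For the converse I would assume \eqref{eq:5} and produce the center as a common point of $a_1$, $a_2$, $a_3$, and then check that $a_4$ passes through it as well. Two lines $A_p+\RSet\vec n_p$ and $A_q+\RSet\vec n_q$ lie in a common plane if and only if $(A_p-A_q)\cdot(\vec n_p\times\vec n_q)=0$. Now the faces of $\tet B$ opposite $B_p$ and opposite $B_q$ share an edge whose direction is orthogonal to both $\vec n_p$ and $\vec n_q$, hence $\vec n_p\times\vec n_q$ is parallel to that edge and the coplanarity condition becomes precisely one of the relations \eqref{eq:5}; thus $a_1,a_2,a_3$ are pairwise coplanar. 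They are moreover pairwise non-parallel and not all contained in one plane, because the three face normals $\vec n_1,\vec n_2,\vec n_3$ of the non-degenerate tetrahedron $\tet B$ are linearly independent --- for instance $\vec n_2$ and $\vec n_3$ are orthogonal to the edge $B_1\vee B_4$ common to the faces opposite $B_2$ and $B_3$, whereas $\vec n_1$ is not, since $B_1$ does not lie on the face opposite $B_1$. Three pairwise intersecting lines with linearly independent directions pass through a common point $O_A$. Finally, to see $O_A\in a_4$, take an edge $B_m\vee B_n$ of the face opposite $B_4$ and choose $r\in\{1,2,3\}\setminus\{m,n\}$; then $(O_A-A_r)\cdot(B_m-B_n)=0$ (from $O_A\in a_r$) and $(A_r-A_4)\cdot(B_m-B_n)=0$ (from \eqref{eq:5}), hence $(O_A-A_4)\cdot(B_m-B_n)=0$; as $B_m\vee B_n$ runs through the edges of that face, $O_A-A_4$ is orthogonal to it, i.e.\ $O_A\in a_4$.

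The crux is the middle step, upgrading ``$a_1,a_2,a_3$ are pairwise coplanar'' to ``$a_1,a_2,a_3$ are concurrent'': pairwise-intersecting distinct lines could otherwise form a triangle in a common plane, and it is exactly the linear independence of the face normals $\vec n_1,\vec n_2,\vec n_3$ that excludes this, and simultaneously rules out the apparent exceptional cases in which some $a_p$ and $a_q$ are parallel. Everything else is bookkeeping with scalar products of edge vectors of $\tet A$ and $\tet B$.
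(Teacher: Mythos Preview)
Your proof is correct and takes a genuinely different route from the paper's for the converse direction. (The forward direction is essentially identical, just phrased with dot products rather than ``the plane $A_i \vee A_j \vee O_A$ contains two perpendiculars to $B_k \vee B_l$''.) For the converse the paper first translates $\tet{A}$ so that three pairs of non-corresponding edges actually meet in points $V_{12}$, $V_{13}$, $V_{23}$, then projects orthographically onto the face plane $A_1 \vee A_2 \vee A_3$ and appeals to the planar theory: the triangles $A_1A_2A_3$ and $V_{23}V_{13}V_{12}$ are orthologic, so the projected perpendiculars $a'_1,a'_2,a'_3$ concur, hence so do $a_1,a_2,a_3$. This deliberately sets up the projection and pedal-triangle machinery that the paper reuses in its later theorems. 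You instead stay with vectors: since $\vec n_p \times \vec n_q$ is parallel to the edge shared by the two opposite faces, the coplanarity condition for $a_p$ and $a_q$ is literally one of the relations \eqref{eq:5}, and linear independence of any three face normals of a non-degenerate tetrahedron upgrades pairwise intersection to concurrency. Your argument is shorter and self-contained---no translation trick, no appeal to the triangle case---at the cost of not planting the descriptive-geometry tools the paper needs afterwards. One small remark on what you call the crux: your ``for instance'' clause only shows $\vec n_1 \notin \operatorname{span}(\vec n_2,\vec n_3)$; that $\vec n_2$ and $\vec n_3$ are themselves independent follows because the corresponding faces share an edge and hence are not parallel, which you might make explicit.
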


\begin{proof}
  We only require that the lines $a_i$ through $A_i$ and orthogonal to
  the plane $B_j \vee B_k \vee B_l$ intersect in a point $O_A$. The
  plane $A_i \vee A_j \vee O_A$ contains two lines orthogonal to the
  line $B_k \vee B_l$, $(i,j,k,l) \in \ISet_4$. Therefore, all lines
  in this plane, in particular $A_i \vee A_j$, are orthogonal to $B_k
  \vee B_l$.

  Assume conversely that the orthogonality conditions \eqref{eq:5}
  hold. Clearly, any two perpendiculars $a_i$ intersect. We have to
  show that all intersection points $A_{ij} = a_i \cap a_j$
  coincide. Using our freedom to translate the tetrahedron $\tet{A}$
  without destroying orthogonality relations we can ensure, without
  loss of generality, the existence of the intersection points
  \begin{equation}
    \label{eq:6}
    \begin{gathered}
      V_{12} \defeq (A_1 \vee A_2) \cap (B_3 \vee B_4),\\
      V_{13} \defeq (A_1 \vee A_3) \cap (B_2 \vee B_4),\\
      V_{23} \defeq (A_2 \vee A_3) \cap (B_1 \vee B_4).
    \end{gathered}
  \end{equation}
  Consider now the orthographic projection onto the plane $A_1 \vee
  A_2 \vee A_3$ (Figure~\ref{fig:orthographic-projection}). We denote
  the projection of a point $X$ by $X'$. By the Right-Angle Theorem of
  descriptive geometry,\footnote{In an orthographic projection the
    right angle between two lines appears as right angle if and only
    if one line is in true size (parallel to the image plane) and the
    other is not in a point-view (orthogonal to the image plane).}
  the points $B_1'$, $B_2'$ and $B_3'$ lie on the perpendiculars
  through $B_4'$ onto the sides of the triangle $\tri{A} =
  A_1A_2A_3$. Moreover, since the plane $V_{12} \vee V_{13} \vee
  V_{23}$ appears in true size, the lines $a_i$ through $A_i$ and
  orthogonal to the respective face planes of $\tet{A}$ have
  projections $a'_1$, $a'_2$, $a'_3$ orthogonal to the edges of the
  triangle $\tri{V} = V_{23}V_{13}V_{12}$. The triangles $\tri{V}$ and
  $\tri{A}$ are orthologic. Therefore the lines $a'_i$ intersect in a
  point $O'_A$ which is necessarily the projection of a common point
  $O_A$ of the lines $a_1$, $a_2$, and~$a_3$.
\end{proof}

\begin{figure}
  \centering
  \includegraphics{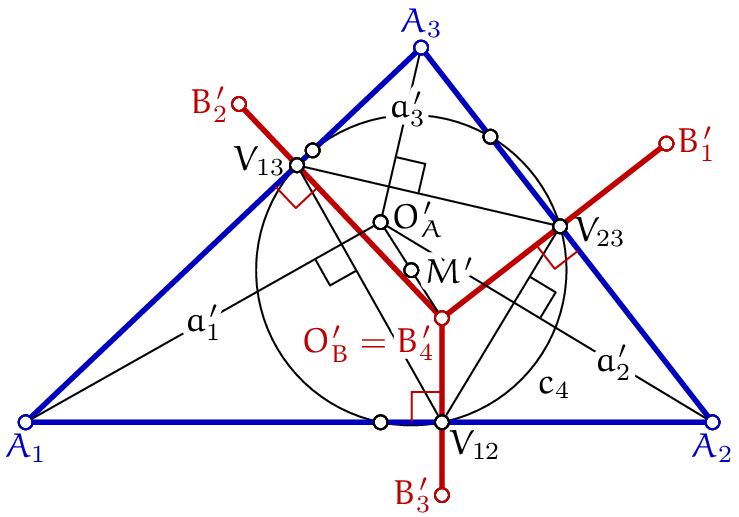}
  \caption{Orthographic projection onto a face plane}
  \label{fig:orthographic-projection}
\end{figure}

\section{The six intersection points}
\label{sec:intersection-points}

The new results in this paper concern pairs of orthologic tetrahedra
$\tet{A} = A_1A_2A_3A_4$ and $\tet{B} = B_1B_2B_3B_4$ such that
non-corresponding edges are not only orthogonal but also
intersecting. That is, in addition to \eqref{eq:5} we also require
existence of the points
\begin{equation}
  \label{eq:7}
  V_{ij} \defeq (A_i \vee A_j) \cap (B_k \vee B_l) \neq \varnothing,\quad
  (i,j,k,l) \in \ISet_4.
\end{equation}

\begin{definition}
  \label{def:2}
  We call two tetrahedra $\tet{A}$ and $\tet{B}$ \emph{orthosecting}
  if their vertices can be labelled as $A_1A_2A_3A_4$ and
  $B_1B_2B_3B_4$, respectively, such that \eqref{eq:5} and
  \eqref{eq:7} hold.
\end{definition}

\begin{theorem}
  \label{th:3}
  If two tetrahedra are orthosecting, the six intersection points of
  non-corresponding edges lie on a sphere (or a plane, if flat
  tetrahedra are permitted). The sphere center is the midpoint between
  the orthology centers.
\end{theorem}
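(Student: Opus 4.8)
The plan is to work in coordinates adapted to the orthology centre $O_A$ and to reduce the whole statement to the constancy of one scalar. Place $O_A$ at the origin. Since the lines $a_i$ of \eqref{eq:3} all pass through $O_A$, each $A_m$ lies on the normal of the face of $\tet{B}$ opposite $B_m$; write $\vec{n}_m$ for a unit vector along that normal, so that $A_m=\lambda_m\vec{n}_m$ for scalars $\lambda_m$, and write the face plane as $\{x:\langle x,\vec{n}_m\rangle=h_m\}$, so that $\langle B_q,\vec{n}_m\rangle=h_m$ whenever $q\neq m$. Fix $(i,j,k,l)\in\ISet_4$. Because $V_{ij}$ lies on $A_i\vee A_j$ with $A_i=\lambda_i\vec{n}_i$, $A_j=\lambda_j\vec{n}_j$, we may write $V_{ij}=a\vec{n}_i+b\vec{n}_j$, and then collinearity of $V_{ij},A_i,A_j$ is exactly $a/\lambda_i+b/\lambda_j=1$. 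On the other hand $B_k\vee B_l$ is the intersection of the faces of $\tet{B}$ opposite $B_i$ and opposite $B_j$, so $V_{ij}$ also satisfies $\langle V_{ij},\vec{n}_i\rangle=h_i$ and $\langle V_{ij},\vec{n}_j\rangle=h_j$. With $c\defeq\langle\vec{n}_i,\vec{n}_j\rangle$, solving this $2\times2$ system gives $a=(h_i-h_jc)/(1-c^2)$, $b=(h_j-h_ic)/(1-c^2)$ (the denominator is nonzero for a genuine tetrahedron), and a short computation yields the identity
\begin{equation*}
  |V_{ij}|^2 = a^2+b^2+2ab\,c = ah_i+bh_j = \frac{h_i^2+h_j^2-2h_ih_jc}{1-c^2}.
\end{equation*}

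The crucial ingredient\,---\,and the step I expect to be the main obstacle\,---\,is to locate the second orthology centre $O_B$ in this frame. Since $B_m-O_B$ is normal to the face of $\tet{A}$ opposite $A_m$, and that face passes through $A_p=\lambda_p\vec{n}_p$ for $p\neq m$, its unit normal $\vec{w}_m$ satisfies $\langle\vec{n}_p,\vec{w}_m\rangle=d_m/\lambda_p$ for $p\neq m$, where $d_m$ is the (signed) distance of that face from $O_A$. Writing $O_B=B_m-\sigma_m\vec{w}_m$ and using $\langle B_m,\vec{n}_p\rangle=h_p$ for $p\neq m$ gives $\langle O_B,\vec{n}_p\rangle=h_p-\sigma_md_m/\lambda_p$; as the left side is independent of $m$, the products $\sigma_md_m$ all agree, and calling their common value $\kappa$ yields the uniform relation
\begin{equation*}
  \langle O_B,\vec{n}_p\rangle = h_p-\frac{\kappa}{\lambda_p},\qquad p=1,2,3,4.
\end{equation*}
Here one uses only that the $\lambda_p$ are nonzero and that $\tet{A}$ is non-degenerate; the flat cases are recovered by continuity and account for the possibility of a plane in the statement.

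With this relation in hand, everything collapses. Using bilinearity, the formulas for $a,b$, the relation $a/\lambda_i+b/\lambda_j=1$, and the identity for $|V_{ij}|^2$ above,
\begin{equation*}
  \langle V_{ij},O_B\rangle
  = a\Bigl(h_i-\tfrac{\kappa}{\lambda_i}\Bigr)+b\Bigl(h_j-\tfrac{\kappa}{\lambda_j}\Bigr)
  = (ah_i+bh_j)-\kappa\Bigl(\tfrac{a}{\lambda_i}+\tfrac{b}{\lambda_j}\Bigr)
  = |V_{ij}|^2-\kappa .
\end{equation*}
Hence, with $M\defeq\tfrac12(O_A+O_B)=\tfrac12 O_B$ the midpoint of the two orthology centres,
\begin{equation*}
  |V_{ij}-M|^2 = |V_{ij}|^2-\langle V_{ij},O_B\rangle+\tfrac14|O_B|^2 = \kappa+\tfrac14|O_B|^2,
\end{equation*}
which is independent of the choice of $(i,j,k,l)\in\ISet_4$. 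Therefore all six points $V_{ij}$ lie on the sphere about $M$ of radius $\sqrt{\kappa+\tfrac14|O_A-O_B|^2}$, degenerating to a plane only when flat tetrahedra are admitted; this is precisely the assertion.

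I expect the routine parts (the explicit $V_{ij}$ and the $|V_{ij}|^2$ identity) to go through directly; the only place demanding care is the passage from the four local normal conditions defining $O_B$ to the single uniform relation $\langle O_B,\vec{n}_p\rangle=h_p-\kappa/\lambda_p$, together with a clean treatment of the degenerate positions ($O_A$ coinciding with some $A_i$, parallel faces, flat tetrahedra) where the coordinates above break down.
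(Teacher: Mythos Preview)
Your argument is correct and complete (modulo the degenerate cases you already flag), but it proceeds along an entirely different line from the paper's proof. The paper argues synthetically via descriptive geometry: it projects orthographically onto a face plane of $\tet{A}$, observes that the three intersection points on that face form the pedal triangle of $B_4'$, and then invokes the classical fact that the centre of a pedal circle is the midpoint between the point and its isogonal conjugate to show that the axes of the four face circles all pass through the midpoint $M$ of $O_A$ and $O_B$; since neighbouring face circles share a vertex $V_{ij}$, they are co-spherical. Your proof instead fixes $O_A$ at the origin, encodes both tetrahedra through the normals $\vec{n}_m$ and the plane offsets $h_m$, and reduces everything to the algebraic identity $\langle V_{ij},O_B\rangle=|V_{ij}|^2-\kappa$, from which constancy of $|V_{ij}-M|^2$ is immediate.

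What each buys: the paper's route is tied to pedal triangles and isogonal conjugacy, which is exactly the machinery it needs later for pedal chains and the conjugate-pair construction, so the proof does double duty. Your route is self-contained, avoids any appeal to triangle-geometry lemmas, and as a bonus produces an explicit formula $r^2=\kappa+\tfrac14|O_A-O_B|^2$ for the sphere radius that the paper does not state. The one place where your write-up could be tightened is the derivation of the uniform relation $\langle O_B,\vec{n}_p\rangle=h_p-\kappa/\lambda_p$: as written you obtain it for $p\neq m$ and then use independence from $m$; it is worth saying explicitly that for any two indices $m,m'$ one can choose $p\notin\{m,m'\}$, so the products $\sigma_m d_m$ do all coincide, after which the relation holds for every $p$.
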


\begin{proof}
  Denote the two tetrahedra by $\tet{A} = A_1A_2A_3A_4$ and $\tet{B} =
  B_1B_2B_3B_4$ such that the lines $A_i \vee A_j$ and $B_k \vee B_l$
  intersect orthogonally in $V_{ij}$ for $(i,j,k,l) \in \ISet_4$. As
  in the proof of Proposition~\ref{prop:1} we consider the
  orthographic projection onto the plane $A_1 \vee A_2 \vee A_3$
  (Figure~\ref{fig:orthographic-projection}). Clearly, $B_4'$ equals
  the projection $O_B'$ of the orthology center $O_B$ of $\tet{B}$
  with respect to $\tet{A}$. If it lies on the circumcircle of
  $A_1A_2A_3$, all perpendiculars from $B_4'$ onto the sides of
  $A_1A_2A_3$ are parallel. In this case the tetrahedron
  $B_1B_2B_3B_4$ is flat and the theorem's statement holds. Otherwise,
  the points $V_{12}$, $V_{13}$, and $V_{23}$ define a circle
  $c_4$\,---\,the pedal circle of the point $B'_4$ with respect to the
  triangle $A_1A_2A_3$.  By the Right-Angle Theorem the projection
  $O'_A$ of the orthology center $O_A$ of $\tet{A}$ with respect to
  $\tet{B}$ is the orthology center of the triangle $A_1A_2A_3$ with
  respect to the triangle $V_{23}V_{13}V_{12}$. Moreover, from
  elementary triangle geometry it is known that the center $M'$ of
  $c_4$ halves the segment between $B'_4$ and $O'_A$
  \cite[pp.~54--56]{honsberger91:_mathemical_morsels}. Hence all
  circles $c_i$ drawn in like manner on the faces of $\tet{A}$ have
  axes which intersect in the midpoint $M$ of the two orthology
  centers $O_A$ and $O_B$. Moreover, any two of these circles share
  one of the points $V_{ij}$. Hence, these circles are co-spherical
  and the proof is finished.
\end{proof}

The proof of Theorem~\ref{th:3} can also be applied to a slightly more
general configuration where only five of the six edges intersect
orthogonally. We formulate this statement as a corollary:

\begin{corollary}
  \label{cor:4}
  If $\tet{A} = A_1A_2A_3A_4$ and $\tet{B} = B_1B_2B_3B_4$ are two
  orthologic tetrahedra such that five non-corresponding edges
  intersect, the five intersection points lie on a sphere (or a
  plane).
\end{corollary}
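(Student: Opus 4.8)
The plan is to adapt the proof of Theorem~\ref{th:3} with only a minor bookkeeping change. First I would note that among the six index-pairs $\{i,j\}$, exactly one pair of edges is now \emph{not} required to intersect; without loss of generality I relabel so that the missing intersection is $V_{34} = (A_3 \vee A_4) \cap (B_1 \vee B_2)$. This is the symmetric thing to do because the five surviving intersection points are then $V_{12}$, $V_{13}$, $V_{14}$, $V_{23}$, $V_{24}$, which distribute across the faces of $\tet{A}$ as follows: the face $A_1 \vee A_2 \vee A_3$ carries $V_{12}$, $V_{13}$, $V_{23}$; the face $A_1 \vee A_2 \vee A_4$ carries $V_{12}$, $V_{14}$, $V_{24}$; the face $A_1 \vee A_3 \vee A_4$ carries $V_{13}$, $V_{14}$ (only two); and the face $A_2 \vee A_3 \vee A_4$ carries $V_{23}$, $V_{24}$ (only two).

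Next I would run the pedal-circle argument from the proof of Theorem~\ref{th:3} on the two faces that still carry three of the $V_{ij}$, namely $A_1 \vee A_2 \vee A_3$ and $A_1 \vee A_2 \vee A_4$. As in that proof, on the face $A_1 \vee A_2 \vee A_3$ the three points $V_{12}$, $V_{13}$, $V_{23}$ lie on the pedal circle $c_4$ of $B_4' = O_B'$ with respect to the triangle $A_1 A_2 A_3$, and the center of $c_4$ is the midpoint of the segment between $O_A'$ and $O_B'$ projected into that plane; consequently the axis of $c_4$ passes through the midpoint $M$ of $O_A O_B$. The identical reasoning applied to the face $A_1 \vee A_2 \vee A_4$ produces a circle $c_3$ through $V_{12}$, $V_{14}$, $V_{24}$ whose axis also passes through $M$. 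Since $c_3$ and $c_4$ share the point $V_{12}$, they are co-spherical: there is a unique sphere $\Sigma$ centered at $M$ containing both circles, hence containing all five of the points $V_{12}$, $V_{13}$, $V_{14}$, $V_{23}$, $V_{24}$. (If either $B_3'$ or $B_4'$ lies on the relevant circumcircle the corresponding tetrahedron is flat and $\Sigma$ degenerates to a plane, exactly as in Theorem~\ref{th:3}.)

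The one point that needs a word of care\,---\,and the only real obstacle\,---\,is verifying that the degenerate sub-cases are handled and that two faces carrying three intersection points always exist. For the latter: whichever single pair of edges fails to intersect, it involves two vertices of $\tet{A}$, say $A_p$ and $A_q$; the two faces of $\tet{A}$ \emph{not} containing both $A_p$ and $A_q$ are precisely the two faces through the remaining vertices, and each of those still carries a full triple of the surviving $V_{ij}$. So such a pair of faces always exists, and they always share the edge $A_r \vee A_s$ (with $\{r,s\} = \{1,2,3,4\} \setminus \{p,q\}$), hence share the intersection point $V_{rs}$, which is exactly the common point needed to conclude co-sphericity. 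For the degenerate cases one observes, as in the proof of Theorem~\ref{th:3}, that $O_B'$ on a circumcircle forces parallel perpendiculars and a flat $\tet{B}$; the statement then reads with ``sphere'' replaced by ``plane'' and the argument is unchanged. No new computation is required beyond what Theorem~\ref{th:3} already supplies.
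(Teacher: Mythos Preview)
Your proposal is correct and is precisely the adaptation the paper has in mind: you restrict the pedal-circle argument of Theorem~\ref{th:3} to the two faces of $\tet{A}$ that still carry a full triple of intersection points, obtain two circles whose axes both pass through the midpoint $M$ of $O_AO_B$, and use their shared point $V_{rs}$ to conclude co-sphericity. The only addition over the paper's one-line remark is your bookkeeping paragraph verifying that such a pair of faces (sharing an edge and hence a common $V_{ij}$) always exists, which is a reasonable point to make explicit.
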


\section{The one-parametric family of solution tetrahedra}
\label{sec:solution-family}

So far we have dealt with properties of a pair of orthosecting
tetrahedra but we have left aside questions of existence or
computation. In this section $\tet{A} = A_1A_2A_3A_4$ is a given
tetrahedron to which an orthosecting tetrahedron $\tet{B} =
B_1B_2B_3B_4$ is sought.

\subsection{Construction of orthologic tetrahedra}
\label{sec:construction-orthologic}

At first, we consider the simpler case of orthologic pairs. Clearly,
translation of the face planes of $\tet{B}$ will transform an
orthologic tetrahedron into a like tetrahedron (unless all planes pass
through a single point). Therefore, we consider tetrahedra with
parallel faces as equivalent.

The maybe simplest construction of an equivalence class of solutions
consists of the choice of the orthology center $O_A$. This immediately
yields the face normals $\vec{n}_i$ of $\tet{B}$ as connecting vectors
of $O_A$ and $A_i$. The variety of solution classes is of dimension
three, one solution to every choice of $O_A$. Since five edges
determine two face planes of a tetrahedron and, in case of suitable
orthogonality relations, also the orthology center $O_A$, we obtain

\begin{theorem}
  \label{th:5}
  If the vertices of two tetrahedra can be labelled such that five
  non-corresponding pairs of edges are orthogonal then so is the
  sixth.
\end{theorem}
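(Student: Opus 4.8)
The plan is to reduce the statement to a single linear identity among six scalar products. Identify each vertex with its position vector, writing $A_i$ and $B_i$ for both the points and the vectors. For every edge $A_i \vee A_j$ of $\tet{A}$ the non-corresponding edge of $\tet{B}$ is $B_k \vee B_l$ with $\{i,j,k,l\} = \{1,2,3,4\}$, so the six orthogonality conditions of the theorem are precisely the vanishing of the six scalars
\[
  s_{ij} \defeq (A_i - A_j)\cdot(B_k - B_l),\qquad
  i<j,\ \{k,l\} = \{1,\dots,4\}\setminus\{i,j\},\ k<l.
\]

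The key step is to establish, for arbitrary points $A_1,\dots,A_4$ and $B_1,\dots,B_4$, the identity
\[
  s_{12} + s_{34} - s_{13} - s_{24} + s_{14} + s_{23} = 0 .
\]
I expect this to drop out of a brute-force expansion: each $s_{ij}$ is a sum of four terms $A_\mu\cdot B_\nu$, and after collecting, the pure products $A_\mu \cdot A_\nu$ never appear while every $A_\mu\cdot B_\nu$ cancels (it is convenient to group $\{s_{12},s_{13},s_{14}\}$ against $\{s_{34},s_{24},s_{23}\}$, each group collapsing to the same six-term expression with opposite sign). This mechanical verification is really the only ingredient, so locating and checking the identity is ``the hard part'', although it is entirely routine.

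Granting the identity, the theorem is immediate: each $s_{ij}$ occurs with coefficient $\pm1$, hence equals a signed sum of the remaining five; so if five of the scalars vanish, so does the sixth, which is exactly the assertion that the sixth pair of non-corresponding edges is orthogonal.

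Finally I would record the geometric reading of this computation, linking it to the paragraph preceding the theorem. By the proof of Proposition~\ref{prop:1}, orthogonality of $A_i\vee A_j$ and $B_k\vee B_l$ is equivalent to the perpendiculars $a_i$ and $a_j$ (through $A_i$, $A_j$ and normal to the opposite faces of $\tet{B}$) meeting, so the six conditions say that the four lines $a_1,\dots,a_4$ intersect pairwise. Since $A_1,\dots,A_4$ are not coplanar, the $a_i$ cannot all lie in one plane; a short case distinction\,---\,each of the triples $a_1a_2a_3$ and $a_1a_2a_4$ is either concurrent or spans the corresponding face plane of $\tet{A}$\,---\,then shows that five pairwise intersections already force concurrency of all four lines, and hence the sixth intersection. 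This alternative argument reproves the identity synthetically and matches the descriptive-geometry flavour of the earlier sections.
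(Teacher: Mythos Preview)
Your main argument via the linear identity
\[
  s_{12} - s_{13} + s_{14} + s_{23} - s_{24} + s_{34} = 0
\]
is correct and complete; the expansion does collapse exactly as you describe, and the conclusion follows at once. This is, however, a genuinely different route from the paper's. The paper does not write out a formal proof but derives Theorem~\ref{th:5} from the construction in the preceding paragraph: choosing the orthology center $O_A$ fixes all four face normals of $\tet{B}$, and five orthogonality conditions already pin down two full faces of $\tet{B}$ and hence $O_A$; the sixth condition is then automatic. Your closing paragraph, reducing the statement to ``five pairwise intersections among $a_1,\dots,a_4$ force concurrency'', is essentially a cleaned-up version of that same idea. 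What your algebraic identity buys is a one-line, coordinate-free proof that needs neither Proposition~\ref{prop:1} nor any non-degeneracy discussion (parallel $a_i$, coplanar triples, etc.), and it makes the linear dependence among the six conditions completely explicit. The paper's argument, on the other hand, stays within the descriptive-geometry framework used elsewhere and explains \emph{why} the dependence exists: the space of orthology classes is three-dimensional, parametrised by~$O_A$.
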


The variety of all solution classes contains a two-parametric set of
trivial solutions $\vec{n}_1 = \vec{n}_2 = \vec{n}_3 =
\vec{n}_4$. They correspond to orthology centers at infinity, the
solution tetrahedra are flat. Note that the possibility to label the
edges such that non-corresponding pairs are orthogonal is essential
for the existence of non-flat solutions. If, for example,
corresponding edges are required to be orthogonal only flat solutions
exist.

\subsection{Conjugate pairs of orthosecting tetrahedra}
\label{sec:conjugate-pairs}

Establishing algebraic equations for solution tetrahedra is
straightforward. Six orthogonality conditions and six intersection
condition result in a system of six linear and six quadratic equations
in the twelve unknown coordinates of the vertices of
$\tet{B}$. Because of Theorem~\ref{th:5}, only five of the six linear
orthogonality conditions are independent. Therefore, we can expect a
one-dimensional variety of solution tetrahedra. This expectation is
generically true, as can be confirmed by computing the dimension of
the ideal spanned by the orthosecting conditions by means of a
computer algebra system.

The numeric solution of the system induced by the orthosecting
conditions poses no problems. We used the software
Bertini,\footnote{D.~J.~Bates, J.~D.~Hauenstein, A.~J.~Sommese,
  Ch.~W.~Wampler: Bertini: Software for Numerical Algebraic Geometry,
  \url{http://www.nd.edu/~sommese/bertini/}.} for that purpose. Symbolic
approaches are feasible as well. One of them will be described in
Subsection~\ref{sec:computational-issues}.  It is based on a curious
conjugacy which can be defined in the set of all tetrahedra that
orthosect the given tetrahedron~$\tet{A}$.

Assume that $\tet{B} = B_1B_2B_3B_4$ is a solution tetrahedron and
denote the orthographic projection of $B_i$ onto the face plane $A_j
\vee A_k \vee A_l$ by $B^\star_i$, $(i,j,k,l) \in \ISet_4$. By the
Right-Angle theorem the pedal points of all points $B^\star_i$ on the
edges of $A_jA_kA_l$ are precisely the intersection points defined in
\eqref{eq:7}. Three intersection points on the same face of $\tet{A}$
form a pedal triangle. This observation gives rise to

\begin{definition}
  \label{def:6}
  A \emph{pedal chain} on a tetrahedron is a set of four pedal
  triangles, each with respect to one face triangle of the
  tetrahedron, such that any two pedal triangles share the vertex on
  the common edge of their faces (Figure~\ref{fig:pedal-chain}). If
  all vertices of pedal triangles lie on a sphere (or a plane), we
  speak of a \emph{spherical pedal chain}.
\end{definition}

\begin{figure}
  \centering
  \includegraphics{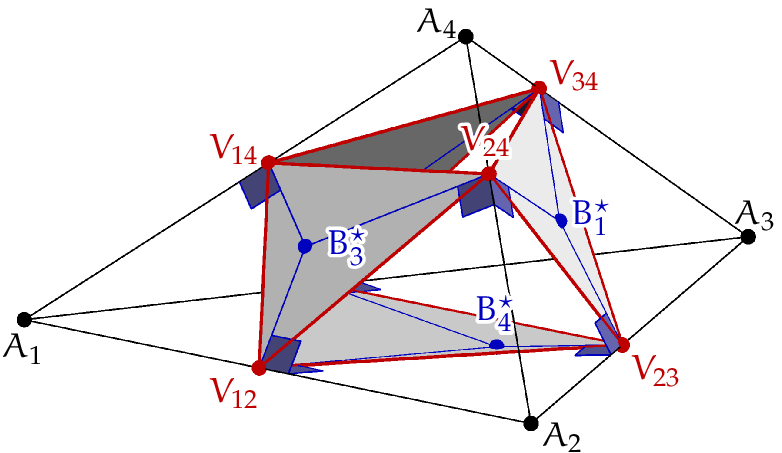}
  % \begin{overpic}{pedal-chain}
  %   \put(-2,10){$A_1$}
  %   \put(71,-2){$A_2$}
  %   \put(97,28){$A_3$}
  %   \put(54,54){$A_4$}
  %   \put(42,24){\textcolor{col2}{\contour{white}{$B^\star_3$}}}
  %   \put(75,28){\textcolor{col2}{\contour{white}{$B^\star_1$}}}
  %   \put(62,15){\textcolor{col2}{\contour{white}{$B^\star_4$}}}
  %   \put(26,4){\textcolor{col1}{$V_{12}$}}
  %   \put(83,7){\textcolor{col1}{$V_{23}$}}
  %   \put(72,47){\textcolor{col1}{$V_{34}$}}
  %   \put(26,37){\textcolor{col1}{$V_{14}$}}
  %   \put(60,38){\textcolor{col1}{\contour{white}{$V_{24}$}}}
  % \end{overpic}
  \caption{A pedal chain.}
  \label{fig:pedal-chain}
\end{figure}

If $A_1A_2A_3A_4$ and $B_1B_2B_3B_4$ are orthosecting, the proof of
Theorem~\ref{th:3} shows that six intersection points are the vertices
of a spherical pedal chain. The converse is also true:

\begin{theorem}
  \label{th:7}
  Given the vertices $V_{ij}$ of a spherical pedal chain on a
  tetrahedron $\tet{A} = A_1A_2A_3A_4$ there exists a unique
  orthosecting tetrahedron $\tet{B} = B_1B_2B_3B_4$ such that $A_i
  \vee A_j \cap B_k \vee B_l = V_{ij}$ for all $(i,j,k,l) \in
  \ISet_4$.
\end{theorem}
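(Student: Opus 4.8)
The strategy is to observe that the prescribed incidences already determine $\tet B$ completely, and then to show that the single further scalar condition hidden in ``spherical'' is exactly what makes the resulting tetrahedron orthologic to $\tet A$. Throughout write $\pi_m$ for the face $A_j \vee A_k \vee A_l$ of $\tet A$ opposite $A_m$ and $\sigma_{ij}$ for the plane through $V_{ij}$ orthogonal to $A_i \vee A_j$, where $(i,j,k,l)\in\ISet_4$.

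\emph{Recovering the solution.} On the face $\pi_m$ the three intersection points with $i,j\neq m$ form a pedal triangle, and the point having a prescribed pedal triangle is unique: it is the common intersection of the perpendiculars erected on the three sides at the three vertices. This yields a well-defined point $P_m\in\pi_m$, and, by the Right-Angle argument used in Proposition~\ref{prop:1} and in the discussion preceding Definition~\ref{def:6}, any solution tetrahedron must have $P_m$ as the orthographic projection of $B_m$ onto $\pi_m$; hence $B_m$ is forced onto the line $n_m$ through $P_m$ orthogonal to $\pi_m$. More directly, the face of $\tet B$ opposite $B_m$ carries the three edges $B_k\vee B_l$ issuing from it, and these contain the three intersection points $V_{jm},V_{km},V_{lm}$ that lie on the edges of $\tet A$ through $A_m$; so this face is forced to be the plane $\varphi_m \defeq V_{jm}\vee V_{km}\vee V_{lm}$. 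I would therefore \emph{define} $\tet B$ to be the tetrahedron with face planes $\varphi_1,\dots,\varphi_4$ (generically four planes in general position, hence a genuine tetrahedron). Since $V_{ij}$ occurs among the three defining points of both $\varphi_i$ and $\varphi_j$, it lies on $\varphi_i\cap\varphi_j = B_k\vee B_l$, and because $V_{ij}\in A_i\vee A_j$ this gives $(A_i\vee A_j)\cap(B_k\vee B_l)=V_{ij}$. Uniqueness is then immediate, the faces of $\tet B$ having been forced.

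\emph{Bringing in the sphere.} What remains is the orthology of $\tet A$ and $\tet B$, i.e.\ $\varphi_i\cap\varphi_j\perp A_i\vee A_j$; since both lines pass through $V_{ij}$ this is equivalent to $\varphi_i\cap\varphi_j\subset\sigma_{ij}$, and also to the four lines $n_m$ being concurrent with $B_m\in n_m$. Two of these lines, $n_k$ and $n_l$, already lie in the common plane $\sigma_{ij}$ of the complementary pair: the pedal chain forces $P_k,P_l\in\sigma_{ij}$ (as $V_{ij}$ is their common pedal point on $A_i\vee A_j$), and the direction of $n_k$, being the normal of $\pi_k$, is orthogonal to $A_i\vee A_j$; likewise for $n_l$. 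Hence the four $n_m$ meet pairwise, and as their directions are the four face-normals of $\tet A$, which span $\RSet^3$, they cannot be coplanar, so all four pass through a single point $O_B$. Now feed in the spherical hypothesis through the pedal-circle description from the proof of Theorem~\ref{th:3}: the three intersection points on $\pi_m$ lie on the pedal circle $c_m$ of $P_m$, and ``spherical'' means precisely $c_m=\Sigma\cap\pi_m$ for one sphere $\Sigma$; thus the centre $M_m$ of $c_m$ is the orthogonal projection onto $\pi_m$ of the centre $M$ of $\Sigma$. By the classical fact used already in the proof of Theorem~\ref{th:3}, $M_m$ is the midpoint of $P_m$ and its isogonal conjugate $P_m^\ast$ in the triangle bounding $\pi_m$. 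Setting $O_A \defeq 2M - O_B$, the orthogonal projection of $O_A$ onto $\pi_m$ equals $2M_m - P_m = P_m^\ast$ for every $m$; that is, $O_A$ sits exactly the way the Right-Angle argument demands of the orthology centre of $\tet A$ with respect to a solution tetrahedron. Running that argument backwards on each face then identifies $O_A$ as a genuine orthology centre of the constructed $\tet B$, giving the required orthology.

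\emph{Main obstacle.} The real work is this last implication: that cosphericity --- one scalar condition, cutting the two-parameter family of pedal chains on $\tet A$ down to one parameter --- is exactly what repairs an otherwise over-determined reconstruction. Concretely, writing $B_m=O_B+t_m\vec v_m$ on $n_m$, the six incidences $V_{ij}\in B_k\vee B_l$ turn into six linear equations for the reciprocals $1/t_1,\dots,1/t_4$, over-determined by two; one must show that the augmented coefficient matrix has rank $4$ precisely when the $V_{ij}$ lie on a common sphere, which then produces the unique $\tet B$. I would pursue this synthetically via the pedal-circle and isogonal-conjugate bookkeeping above, tracking face by face how the relation ``projection of $O_A$ onto $\pi_m$ equals $P_m^\ast$'' forces each edge $B_k\vee B_l$ into $\sigma_{ij}$, and fall back on a short direct computation of the rank condition if the synthetic route stalls. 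Degenerate cases ($P_m$ on the circumcircle of its face, collinear triples $V_{jm},V_{km},V_{lm}$, flat solution tetrahedra) would be handled separately, in line with the ``or a plane'' provisos of Theorem~\ref{th:3}.
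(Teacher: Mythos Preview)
Your route is genuinely different from the paper's and, once the ``main obstacle'' is filled in, it works. The paper does not build $O_A$ and $O_B$ at all; instead it fixes one edge, relabels nine points into a $3\times 3$ circular net, invokes Bobenko--Suris to conclude that cosphericity of $V_{13},V_{14},V_{12},V_{23},V_{24}$ makes this net \emph{discrete isothermic}, and then cites another Bobenko--Suris theorem to get that the planes $\beta_1$, $\beta_2$ and the plane through the two face normals share a line, forcing $\beta_1\cap\beta_2\perp A_1\vee A_2$. So the paper outsources the key step to discrete differential geometry, treating each edge in isolation; you instead reconstruct both orthology centres globally and use only the pedal-circle/isogonal material already present in the paper.

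Your hesitation about the obstacle is unnecessary: the synthetic argument closes cleanly. Having shown that $O_A$ projects to $P_l^\ast$ on each face $\pi_l$, use the classical fact (equivalent to what the proof of Theorem~\ref{th:3} quotes from Honsberger) that the isogonal conjugate of $P_l$ is the orthology centre of $A_iA_jA_k$ with respect to the pedal triangle of $P_l$; concretely, $A_iP_l^\ast \perp V_{ij}V_{ik}$. Since $V_{ij}V_{ik}=\varphi_i\cap\pi_l$ lies in the image plane $\pi_l$, the Right-Angle Theorem lifts this to $O_AA_i\perp V_{ij}V_{ik}$ in space. Repeating with $\pi_k$ in place of $\pi_l$ gives $O_AA_i\perp V_{ij}V_{il}$, and two independent directions in $\varphi_i$ now witness $O_AA_i\perp\varphi_i$. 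Thus the four perpendiculars from $A_i$ to the faces $\varphi_i$ concur at $O_A$, so $\tet A$ and $\tet B$ are orthologic; together with the already-established incidences $V_{ij}\in B_k\vee B_l$ this gives orthosecting. No fallback computation is needed.

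What each approach buys: the paper's proof is short once one is willing to import two theorems from discrete differential geometry, and it never needs the existence of $O_B$ from the pedal-chain condition alone. Your approach is entirely elementary and internal to the paper's toolkit (Proposition~\ref{prop:1}, Proposition~\ref{prop:8}, the Right-Angle Theorem), and it makes the appearance of the midpoint $M$ of $O_AO_B$ in Theorem~\ref{th:3} look inevitable rather than incidental; it also immediately explains why the conjugate construction of Section~\ref{sec:conjugate-pairs} produces a second orthosecting tetrahedron.
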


\begin{proof}
  If a solution tetrahedron $\tet{B}$ exists at all it must be unique
  since its faces lie in the planes $\beta_i \defeq V_{ij} \vee V_{ik}
  \vee V_{il}$ ($i$, $j$, $k \in \{1,2,3,4\}$ pairwise
  different).\footnote{The case of collinear or coinciding points
    $V_{ij}$ leads to degenerate solution tetrahedra whose faces
    contain one vertex of~$\tet{A}$.}

  \begin{figure}
    \centering
    \includegraphics{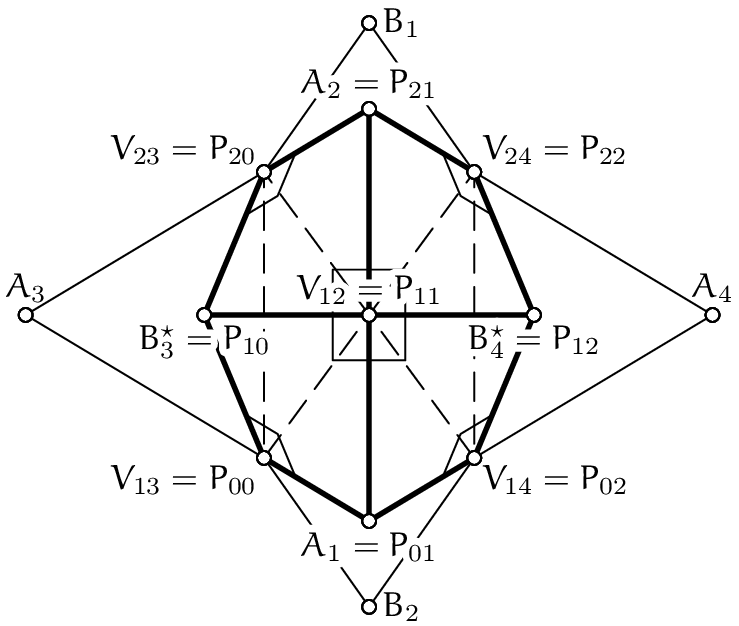}
    \caption{Proof of Theorem~\ref{th:7}}
    \label{fig:5-orthosecting}
  \end{figure}

  In order to prove existence, we have to show that the lines $A_i
  \vee A_j$ and $\beta_i \cap \beta_j$ are, indeed, orthogonal for all
  pairwise different $i$, $j \in \{1,2,3,4\}$. We denote the point
  from which the pedal triangle on the face $A_iA_jA_k$ originates
  (the ``anti-pedal point'') by $B^\star_l$ and show orthogonality
  between $A_1 \vee A_2$ and $\beta_1 \cap \beta_2$ for $(i,j,k,l) \in
  \ISet_4$. Relabelling according to
  \begin{equation}
    \label{eq:8}
    \begin{aligned}
      &P_{00} \defeq V_{13},&
      &P_{01} \defeq A_1,&
      &P_{02} \defeq V_{14},\\
      &P_{10} \defeq B^\star_4,&
      &P_{11} \defeq V_{12},&
      &P_{12} \defeq B^\star_3,\\
      &P_{20} \defeq V_{23},&
      &P_{21} \defeq A_2,&
      &P_{22} \defeq V_{24}
    \end{aligned}
  \end{equation}
  (Figure~\ref{fig:5-orthosecting}) we obtain a net of points
  $P_{ij}$. In every elementary quadrilateral the angle measure at two
  opposite vertices equals $\pi/2$. Thus, the net is \emph{circular.}
  Such structures are extensively studied in the context of discrete
  differential geometry
  \cite{bobenko08:_discrete_differential_geometry}. Our case is rather
  special since two pairs of quadrilaterals span the same plane. This
  does, however, not hinder application of
  \cite[Theorem~4.21]{bobenko08:_discrete_differential_geometry} which
  states that our assumptions on the co-spherical (or co-planar)
  position of the points $P_{00}$, $P_{02}$, $P_{11}$, $P_{20}$, and
  $P_{22}$ is equivalent to the fact that the net $P_{ij}$ is a
  \emph{discrete isothermic net}. These nets have many remarkable
  characterizing properties. One of them, stated in
  \cite[Theorem~2.27]{bobenko08:_discrete_differential_geometry}, says
  that the planes $P_{00} \vee P_{11} \vee P_{02}$, $P_{10} \vee
  P_{11} \vee P_{12}$, and $P_{20} \vee P_{11} \vee P_{22}$ have a
  line in common. In our original notation this means that the line
  $\beta_1 \cap \beta_2$ intersects the face normal of $A_1 \vee A_2
  \vee A_3$ through $B^\star_4$ and the face normal of $A_1 \vee A_2
  \vee A_4$ through $B^\star_3$. Therefore, it is orthogonal to $A_1
  \vee A_2$.
\end{proof}

As a consequence of Theorem~\ref{th:7} it can be shown that tetrahedra
which orthosect $\tet{A}$ come in conjugate pairs: Given $\tet{A}$ and
an orthosecting tetrahedron $\tet{B}$ it is possible to construct a
second orthosecting tetrahedron $\tet{C}$. The same construction with
$\tet{C}$ as input yields the tetrahedron $\tet{B}$.  This conjugacy
is related to the pedal chain originating from $\tet{B}$. The key
ingredient is the following result from elementary triangle geometry
\cite[pp.~54--56]{honsberger91:_mathemical_morsels}:

\begin{figure}
  \centering
  \includegraphics{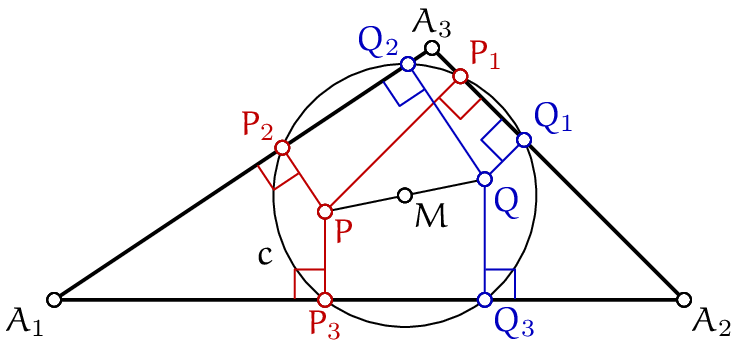}
  \caption{Pedal circles in a triangle}
  \label{fig:pedal-circle}
\end{figure}

\begin{proposition}
  \label{prop:8}
  If $P$ is a point in the plane of the triangle $A_1A_2A_3$ and $c$
  its pedal circle, the reflection $Q$ of $P$ in the center $M$ of $c$
  has the same pedal circle $c$ (Figure~\ref{fig:pedal-circle}).
\end{proposition}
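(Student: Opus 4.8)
The plan is to reduce the statement to a single elementary fact about circles: the foot of the perpendicular from the centre of a circle onto a line is the midpoint of the chord that the line cuts from the circle. First I would fix notation. Let $F_1$, $F_2$, $F_3$ be the pedal points of $P$, i.e.\ the feet of the perpendiculars from $P$ onto the three side lines of $A_1A_2A_3$; by hypothesis $F_1$, $F_2$, $F_3$ lie on $c$ and $M$ is the centre of $c$. Let $G_1$, $G_2$, $G_3$ be the feet of the perpendiculars from $Q$ onto the same three side lines. Since three non-collinear pedal points determine the pedal circle, it suffices to show $G_1, G_2, G_3 \in c$. (We tacitly assume that $P$ does not lie on the circumcircle of $A_1A_2A_3$, so that the pedal circle is a genuine circle rather than a degenerate Simson line.)

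Next I would fix one side line, say $\ell = A_1 \vee A_2$, and exploit that orthogonal projection onto $\ell$ is an affine map. It sends $P$ to $F_3$ and $Q$ to $G_3$; and since $Q$ is the reflection of $P$ in $M$, the point $M$ is the midpoint of $P$ and $Q$, so it is sent to the midpoint $N_3$ of $F_3$ and $G_3$. Thus $N_3$ is the foot of the perpendicular from $M$ onto $\ell$. On the other hand, $F_3 \in c$ lies on $\ell$, and the foot of the perpendicular from the centre $M$ onto $\ell$ bisects the chord that $\ell$ cuts from $c$; hence the second intersection point of $\ell$ with $c$ is the reflection of $F_3$ in $N_3$, which is exactly $G_3$. (In the degenerate case where $\ell$ is tangent to $c$ one has $N_3 = F_3 = G_3$, and the conclusion still holds.) Therefore $G_3 \in c$.

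Applying the same argument to the side lines $A_2 \vee A_3$ and $A_3 \vee A_1$ gives $G_1, G_2 \in c$ as well, so the pedal triangle of $Q$ is inscribed in $c$ and $c$ is the pedal circle of $Q$. I do not expect a real obstacle here; the only points that require a little care are the remark that orthogonal projection onto a line preserves midpoints and the bookkeeping for the tangency case, both of which are routine.
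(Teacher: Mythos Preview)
Your argument is correct. The key observation---that orthogonal projection onto a side line sends the midpoint $M$ of $PQ$ to the midpoint of the two pedal points, which in turn coincides with the foot of the perpendicular from the circle's centre and hence bisects the chord---is clean and complete. The tangency case and the non-degeneracy remark are handled adequately; note also that since a line meets a circle in at most two points, the three points $G_1,G_2,G_3\in c$ are automatically non-collinear whenever they are distinct, so the pedal circle of $Q$ really is $c$.

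As for comparison: the paper does not give its own proof of this proposition. It is quoted as a known fact from elementary triangle geometry with a reference to Honsberger, and the paper subsequently remarks that $P$ and $Q$ are isogonal conjugates. The classical route in that literature typically goes via the isogonal conjugate: one first shows that a point and its isogonal conjugate share the same pedal circle and that the centre of this circle is their midpoint. Your approach bypasses the isogonal-conjugate machinery entirely and works directly from the reflection hypothesis, which is arguably more economical for the statement as phrased. The trade-off is that your proof does not by itself reveal that $Q$ is the isogonal conjugate of $P$, a fact the paper uses later in naming the self-conjugate curves of Theorem~\ref{th:9}.
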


Suppose that $\tet{A}$ and $\tet{B}$ are orthosecting tetrahedra. The
orthographic projections $B^\star_i$ of the vertices of $\tet{B}$ onto
corresponding face planes of $\tet{A}$ are points whose pedal
triangles form a spherical pedal chain. By reflecting $B^\star_i$ in
the centers of the pedal circles on the faces of $\tet{A}$ we obtain
points $C^\star_i$ which, according to Proposition~\ref{prop:8}, give
rise to a second spherical pedal chain (with the same sphere of
vertices) and, by Theorem~\ref{th:7}, can be used to construct a
second orthosecting tetrahedron~$\tet{C}$
(Figure~\ref{fig:conjugate-solutions}).

\begin{figure*}
  \centering
  \begin{minipage}{0.5\linewidth}
    \centering
    \includegraphics{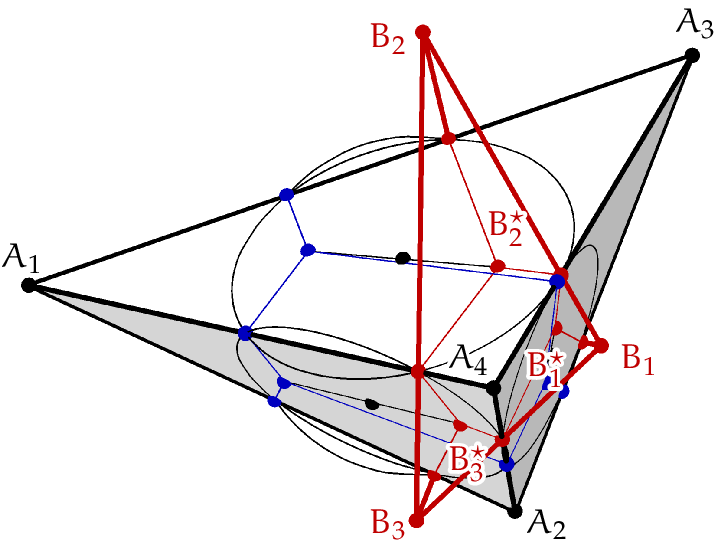}
    % \begin{overpic}{conjugate-solutions-a}
    %   \put(-2,34){$A_1$}
    %   \put(65,0){$A_2$}
    %   \put(84,64){$A_3$}
    %   \put(55,21){\contour{white}{$A_4$}}
    %   \put(77,21){\textcolor{col1}{$B_1$}}
    %   \put(45,62){\textcolor{col1}{$B_2$}}
    %   \put(45,0){\textcolor{col1}{$B_3$}}
    %   \put(65,20){\textcolor{col1}{\contour{white}{$B^\star_1$}}}
    %   \put(60,38){\textcolor{col1}{\contour{white}{$B^\star_2$}}}
    %   \put(55,8){\textcolor{col1}{\contour{white}{$B^\star_3$}}}
    % \end{overpic}
  \end{minipage}%
  \begin{minipage}{0.5\linewidth}
    \centering
    \includegraphics{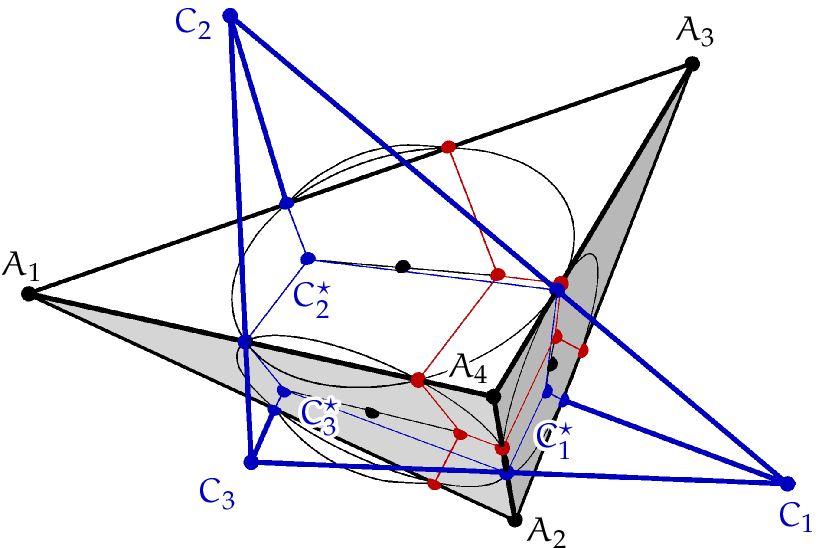}
    % \begin{overpic}{conjugate-solutions-b}
    %   \put(-2,34){$A_1$}
    %   \put(65,0){$A_2$}
    %   \put(84,64){$A_3$}
    %   \put(55,21){\contour{white}{$A_4$}}
    %   \put(97,2){\textcolor{col2}{$C_1$}}
    %   \put(20,65){\textcolor{col2}{$C_2$}}
    %   \put(23,5){\textcolor{col2}{$C_3$}}
    %   \put(66,12){\textcolor{col2}{\contour{white}{$C^\star_1$}}}
    %   \put(35,30){\textcolor{col2}{\contour{white}{$C^\star_2$}}}
    %   \put(36,15){\textcolor{col2}{\contour{white}{$C^\star_3$}}}
    % \end{overpic}
  \end{minipage}
  \caption{A conjugate pair $\tet{B}$, $\tet{C}$ of orthosecting
    tetrahedra.}
  \label{fig:conjugate-solutions}
\end{figure*}

The points $P$ and $Q$ of Proposition~\ref{prop:8} are called
\emph{isogonal conjugates} with respect to the triangle $A_1A_2A_3$.
The above considerations lead immediately to

\begin{theorem}
  \label{th:9}
  Given a tetrahedron $\tet{A} = A_1A_2A_3A_4$, the orthographic
  projection of all vertices $B^\star_i$ of orthosecting tetrahedra
  onto the face plane $A_j \vee A_k \vee A_l$ of $\tet{A}$ (with
  $(i,j,k,l) \in \ISet_4$) is a curve which is isogonally
  self-conjugate with respect to the triangle $A_jA_kA_l$.
\end{theorem}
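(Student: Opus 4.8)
The plan is to turn the vertex-locus question into a statement purely about pedal chains and then read off self-conjugacy from the construction preceding the theorem. Fix the face $A_jA_kA_l$ of $\tet A$; by relabelling we may assume it is $A_1A_2A_3$ and that $B^\star_4$ is the vertex whose orthographic projection onto this plane we are tracking. First I would observe that, by Theorem~\ref{th:7} together with the discussion leading to Proposition~\ref{prop:8}, a point $P$ in the plane $A_1\vee A_2\vee A_3$ equals $B^\star_4$ for some tetrahedron $\tet B$ orthosecting $\tet A$ \emph{if and only if} $P$ is the anti-pedal point, on that face, of a spherical pedal chain on $\tet A$. Call the set of all such admissible $P$ the locus $\ell$; this is exactly the curve in the statement. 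The task is then to show $\ell$ is mapped to itself by isogonal conjugation in $A_1A_2A_3$, i.e. that the isogonal conjugate $Q$ of any $P\in\ell$ again lies in $\ell$.

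The key step is the conjugate-construction argument already assembled in the paragraph before the theorem: if $P=B^\star_4\in\ell$, witnessed by an orthosecting tetrahedron $\tet B$ and its spherical pedal chain, then reflecting each anti-pedal point $B^\star_i$ in the center $M_i$ of the corresponding pedal circle produces points $C^\star_i$ which, by Proposition~\ref{prop:8}, have the \emph{same} pedal circles, hence the same pedal triangles, hence form a spherical pedal chain with the same vertices $V_{ij}$; by Theorem~\ref{th:7} these assemble into a tetrahedron $\tet C$ orthosecting $\tet A$ with $C^\star_i$ its vertex-projections. In particular $C^\star_4\in\ell$. But $C^\star_4$ is by construction the reflection of $B^\star_4=P$ in the center of its pedal circle, which by the definition recalled just before the theorem is precisely the isogonal conjugate $Q$ of $P$ with respect to $A_1A_2A_3$. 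Thus $Q\in\ell$, and applying the same argument to $\tet C$ (whose conjugate is $\tet B$) shows the map is an involution on $\ell$. Since $P$ was arbitrary, $\ell$ is isogonally self-conjugate.

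A small point that must be handled for the argument to be watertight: one should check that the spherical pedal chain witnessing $P\in\ell$ is genuinely determined so that the reflected data really do satisfy the co-spherical hypothesis of Theorem~\ref{th:7}. This is where Proposition~\ref{prop:8} does the real work — it guarantees not merely that each $C^\star_i$ has the same pedal circle but that the four pedal triangles still meet edge-by-edge (they are literally the same triangles $V_{ij}$) and still have co-spherical union (the sphere of Theorem~\ref{th:3} is unchanged). I expect this bookkeeping — verifying that ``reflect in each pedal-circle center'' sends spherical pedal chains to spherical pedal chains vertex-for-vertex — to be the main obstacle, but it is exactly the content of the paragraph preceding the statement, so the proof reduces to citing it and identifying the reflection in the pedal-circle center with isogonal conjugation. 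Degenerate cases (collinear or coincident $V_{ij}$, or $B^\star_4$ on the circumcircle of $A_1A_2A_3$, giving flat $\tet B$) are covered by the footnotes in Theorem~\ref{th:7} and the flat-case discussion in Theorem~\ref{th:3}, and contribute the expected boundary points of the curve $\ell$.
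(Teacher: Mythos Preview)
Your overall strategy is the paper's own: the paper presents no separate proof of Theorem~\ref{th:9} but simply writes ``The above considerations lead immediately to'' after the conjugate-construction paragraph, and you are correctly trying to make that paragraph into an argument that the locus is stable under isogonal conjugation. However, there is a genuine error in your key step. You write that the reflected points $C^\star_i$ ``have the \emph{same} pedal circles, hence the same pedal triangles, hence form a spherical pedal chain with the same vertices $V_{ij}$'', and later that the four pedal triangles ``are literally the same triangles $V_{ij}$''. This is false: Proposition~\ref{prop:8} only guarantees that isogonal conjugates share the same pedal \emph{circle}, not the same pedal \emph{triangle}. The pedal triangle of $C^\star_4$ is a different triangle inscribed in the same circle $c_4$ (look at Figure~\ref{fig:pedal-circle}). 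So the new intersection points $W_{ij}$ are different from the $V_{ij}$, and your stated reason that the $C^\star_i$ form a pedal chain collapses.

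What actually makes the $C^\star_i$ a pedal chain is hidden in the proof of Theorem~\ref{th:3}: the center $M_i'$ of each pedal circle is the projection of the midpoint $M$ of the two orthology centers $O_A$, $O_B$, and each $B^\star_i$ is the projection of $O_B$. Reflecting $B^\star_i$ in $M_i'$ therefore gives the projection of $O_A$ onto that face. Since all four $C^\star_i$ are projections of the \emph{single} point $O_A$, the foot of the perpendicular from $C^\star_l$ onto the edge $A_i\vee A_j$ is the same regardless of which adjacent face you compute it from (it is just the foot of the perpendicular from $O_A$ to the line $A_i\vee A_j$); this is why the new pedal triangles close up edge-by-edge. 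Sphericality then follows because the new pedal vertices lie on the old pedal circles $c_i$, which lie on the old sphere. With this correction your argument goes through and matches the paper's intended one.
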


\subsection{Computational issues}
\label{sec:computational-issues}

We continue with a few remarks on the actual computation of the
isogonal self-conjugate curves of Theorem~\ref{th:9} with the help of
a computer algebra system. Our first result concerns the construction
of pedal chains.

\begin{theorem}
  \label{th:10}
  Consider a tetrahedron $\tet{A} = A_1A_2A_3A_4$ and six points
  $V_{ij} \in A_i \vee A_j$, $(i,j,k,l) \in \ISet_4$. If three of the
  four triangles $V_{ij}V_{jk}V_{ki}$, with $(i,j,k) \in \ISet_3$, are
  pedal triangles with respect to the triangle $A_iA_jA_k$ then this
  is also true for the fourth.
\end{theorem}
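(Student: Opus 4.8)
The plan is to translate each planar pedal condition into a single incidence condition in space. For every edge $A_i \vee A_j$ of $\tet{A}$ introduce the plane $\pi_{ij}$ through $V_{ij}$ orthogonal to $A_i \vee A_j$. Since the edge $A_i \vee A_j$ lies in each of its two adjacent face planes, the normal direction of $\pi_{ij}$ (the edge direction) is orthogonal to both face normals; hence $\pi_{ij}$ is perpendicular to these two face planes and contains their normal directions. In particular the trace of $\pi_{ij}$ in the face plane $A_i \vee A_j \vee A_k$ is exactly the line through $V_{ij}$ perpendicular to the side $A_i \vee A_j$, and $\pi_{ij}$ is the ``cylinder'' swept out by this trace in the direction orthogonal to that face. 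The crucial observation (the \emph{Key Lemma}) is then: $V_{ij}V_{jk}V_{ki}$ is a pedal triangle with respect to $A_iA_jA_k$ if and only if the three planes $\pi_{ij}$, $\pi_{jk}$, $\pi_{ki}$ have a common point, and in that case their intersection is a line $g_{ijk}$ orthogonal to the plane $A_i \vee A_j \vee A_k$. One direction is immediate: if $P$ in the face plane is a pedal point, the perpendiculars to the three sides at $V_{ij}$, $V_{jk}$, $V_{ki}$ all pass through $P$, so each of the three planes contains the line through $P$ orthogonal to the face. Conversely, orthogonally projecting any common point of the three planes onto the face plane yields a point lying on all three traces, hence a common pedal point. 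As the three traces have pairwise different directions, ``common point'' is equivalent to concurrency of the traces, and this also covers the degenerate Simson-line case.

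With the Key Lemma in hand the theorem becomes a short counting argument. Assume, without loss of generality, that the three pedal triangles lie on the faces $A_1A_2A_3$, $A_1A_2A_4$, $A_1A_3A_4$, and let
\[
  g_{123}=\pi_{12}\cap\pi_{23}\cap\pi_{13},\quad
  g_{124}=\pi_{12}\cap\pi_{24}\cap\pi_{14},\quad
  g_{134}=\pi_{13}\cap\pi_{34}\cap\pi_{14}
\]
be the corresponding lines, each orthogonal to its face. Any two of these three lines lie in a common plane ($\pi_{12}$, $\pi_{13}$, or $\pi_{14}$, respectively), and no two of them are parallel, for that would force two faces of $\tet{A}$ to be parallel. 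Hence the three lines intersect pairwise. Three pairwise intersecting, pairwise non-parallel lines are either concurrent or coplanar; coplanarity is impossible here because a plane containing all three lines would contain the normal directions of the three faces of $\tet{A}$ through $A_1$, which are linearly independent. Therefore $g_{123}$, $g_{124}$, $g_{134}$ pass through a single point $Z$. Since $Z\in g_{123}\subset\pi_{23}$, $Z\in g_{124}\subset\pi_{24}$, and $Z\in g_{134}\subset\pi_{34}$, the three planes $\pi_{23}$, $\pi_{24}$, $\pi_{34}$ share the point $Z$, and the Key Lemma (converse direction) shows that $V_{23}V_{34}V_{42}$ is a pedal triangle with respect to $A_2A_3A_4$. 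As a by-product one even gets that the four face perpendiculars $g_{ijk}$ all pass through $Z$.

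I expect essentially all of the work to sit in the Key Lemma; once the planes $\pi_{ij}$ are set up, the rest is the elementary ``concurrent or coplanar'' dichotomy together with the linear independence of three face normals of a tetrahedron. The points to watch are the implicit standing hypothesis that $\tet{A}$ is a genuine (non-flat) tetrahedron, and the degenerate configurations in which some $V_{ij}$ coincides with a vertex of $\tet{A}$ or a pedal triangle degenerates to a Simson line; these are either covered verbatim by the argument above or can be disposed of by a direct check.
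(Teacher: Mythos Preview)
Your argument is correct and, I think, a bit cleaner than the paper's. Both proofs hinge on the same family of lines: for each face $A_iA_jA_k$, the line through the anti-pedal point perpendicular to that face. You introduce these lines as intersections $g_{ijk}=\pi_{ij}\cap\pi_{jk}\cap\pi_{ki}$ of the edge-perpendicular planes; the paper introduces them as the face normals $n_l$ through the anti-pedal points $B^\star_l$. The substantive difference is how concurrency of three of these lines is shown. The paper builds an auxiliary tetrahedron $B^\star_1B^\star_2B^\star_3B^\circ_4$, checks the edge-orthogonality conditions of Proposition~\ref{prop:1}, and reads off concurrency from the resulting orthology center; the final step then uses the Right-Angle Theorem to identify the missing anti-pedal point as a projection. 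You bypass both Proposition~\ref{prop:1} and the descriptive-geometry step: since each pair $g_{123},g_{124},g_{134}$ shares one of the planes $\pi_{12},\pi_{13},\pi_{14}$, the lines intersect pairwise, and the ``concurrent or coplanar'' dichotomy together with linear independence of three face normals at a vertex finishes the job. This makes your proof self-contained and slightly more elementary; the paper's version, on the other hand, ties the result more tightly to the orthology framework used elsewhere in the article and explicitly identifies the orthology center and the missing anti-pedal point. Your remark that all four lines $g_{ijk}$ pass through $Z$ is the same observation, phrased in your language.
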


\begin{proof}
  Assume that the triangles $V_{12}V_{24}V_{14}$,
  $V_{23}V_{24}V_{34}$, and $V_{13}V_{34}V_{14}$ are pedal triangles
  of their respective face triangles. We have to show that
  $V_{12}V_{23}V_{13}$ is a pedal triangle of $A_1A_2A_3$.  As usual,
  the anti-pedal points are denoted by $B^\star_1$, $B^\star_2$, and
  $B^\star_3$.  Clearly, we have $B^\star_i \vee B^\star_j \perp A_k
  \vee A_4$ for $(i,j,k,4) \in \ISet_4$. Denote by $B^\circ_4$ a point
  in the intersection of the three planes incident with $V_{ij}$ and
  perpendicular to $A_i \vee A_j$, $(i,j,k) \in \ISet_3$. By
  Proposition~\ref{prop:1} the tetrahedra $\tet{A}$ and
  $B^\star_1B^\star_2B^\star_3B^\circ_4$ are orthologic. Therefore,
  the face normals $n_l$ of $A_i \vee A_j \vee A_k$ through
  $B^\star_l$ have a point $B_4$ in common ($l \neq 4$, $(i,j,k,l) \in
  \ISet_4$). By the Right-Angle Theorem, the intersection point
  $B^\star_4$ of the orthographic projections of $n_1$, $n_2$, and
  $n_3$ onto $A_1 \vee A_2 \vee A_3$ has $V_{12}V_{23}V_{13}$ as its
  pedal triangle.
\end{proof}

In order to construct a pedal chain on a tetrahedron $\tet{A} =
A_1A_2A_3A_4$ on can proceed as follows:
\begin{enumerate}
\item Prescribe an arbitrary pedal triangle, say $V_{12}V_{23}V_{13}$.
\item Choose one anti-pedal point, say $B^\star_3$, on a neighbouring
  face. It is restricted to the perpendicular to $A_1 \vee A_2$ trough
  $V_{12}$.
\item The remaining pedal points are determined. Theorem~\ref{th:10}
  guarantees that the final completion of $V_{34}$ is possible without
  contradiction.
\end{enumerate}

In order to construct a spherical pedal chain, the choice of
$B^\star_4$ and $B^\star_3$ needs to be appropriate. A simple
computation shows that there exist two possible choices (in algebraic
sense) for $B^\star_3$ such that the points $V_{12}$, $V_{13}$,
$V_{23}$, $V_{14}$, and $V_{24}$ are co-spherical (or
co-planar). Demanding that the remaining vertex $V_{34}$ lies on the
same sphere yields an algebraic condition on the coordinates of
$B^\star_4$\,---\,the algebraic equation of the isogonally
self-conjugate curve $i_4$ from Theorem~\ref{th:9}. We are currently
not able to carry out the last elimination step in full
generality. Examples suggest, however, that $i_4$ is of degree
nine. Once a point on $i_4$ is determined, the computation of the
corresponding orthosecting tetrahedron is trivial.

\section{Conclusion and future research}
\label{sec:conclusion}

We introduced the concept of orthosecting tetrahedra and presented a
few results related to them. In particular we characterized the six
intersection points as vertices of a spherical pedal chain on either
tetrahedron. This characterization allows the construction of
conjugate orthosecting tetrahedra to a given tetrahedron $\tet{A}$.

In general, there exists a one-parametric family of tetrahedra which
orthosect $\tet{A}$. The orthographic projection of their vertices on
the plane of a face triangle of $\tet{A}$ is an isogonally
self-conjugate algebraic curve. Maybe it is worth to study other loci
related to the one-parametric family of orthosecting tetrahedra. Since
every sphere that carries vertices of one pedal chain also carries the
vertices of a second pedal chain, the locus of their centers might
have a reasonable low algebraic degree.

Moreover, other curious properties of orthosecting tetrahedra seem
likely to be discovered. For example, the repeated construction of
conjugate orthosecting tetrahedra yields an infinite sequence $\langle
\tet{B}_n \rangle_{n \in \mathbb{Z}}$ of tetrahedra such that
$\tet{B}_{n-1}$ and $\tet{B}_{n+1}$ form a conjugate orthosecting pair
with respect to $\tet{B}_n$ for every $n \in \mathbb{Z}$. All
intersection points of non-corresponding edges lie on the same sphere
and only two points serve as orthology centers for any orthosecting
pair $\tet{B}_{n}$, $\tet{B}_{n+1}$. General properties and special
cases of this sequence might be a worthy field of further study.

Finally, we would like to mention two possible extensions of this
article's topic. It seems that, with exception of Steiner's result on
orthologic triangles on the sphere, little is known on orthologic
triangles and tetrahedra in non-Euclidean spaces. Moreover, one might
consider a relaxed ``orthology property'' as suggested by the
anonymous reviewer: It requires that the four lines $a_1$, $a_2$,
$a_3$, $a_4$ defined in \eqref{eq:3} lie in a regulus (and not
necessarily in a linear pencil). This concept is only useful if the
regulus position of the lines $a_i$ also implies regulus position of
the lines $b_j$ of \eqref{eq:4}. We have some numerical evidence that
this is, indeed, the case.

%%% Local Variables: 
%%% mode: latex
%%% eval: (ispell-change-dictionary "english")
%%% eval: (flyspell-mode t)
%%% End: 

% flatex input end: [body.tex]

%

%*flatex input: [orthotet-arxiv.bbl]

% flatex input end: [orthotet-arxiv.bbl]
%FLATEX-REM:\bibliographystyle{plainnat}
%FLATEX-REM:\bibliography{mrabbrev,orthotetb}

\begin{thebibliography}{11}
\providecommand{\natexlab}[1]{#1}
\providecommand{\url}[1]{\texttt{#1}}
\expandafter\ifx\csname urlstyle\endcsname\relax
  \providecommand{\doi}[1]{doi: #1}\else
  \providecommand{\doi}{doi: \begingroup \urlstyle{rm}\Url}\fi

\bibitem[Altshiller-Court(1964)]{altshiller-court64:_modern_pure_solid_geometr%
y}
N.~Altshiller-Court.
\newblock \emph{Modern Pure Solid Geometry}.
\newblock Chelsea Publishing Company, New York, 2 edition, 1964.

\bibitem[Bobenko and Suris(2008)]{bobenko08:_discrete_differential_geometry}
A.~I. Bobenko and Y.~B. Suris.
\newblock \emph{Discrete Differential Geometrie. Integrable Structure},
  volume~98 of \emph{Graduate texts in mathematics}.
\newblock American Mathematical Society, 2008.

\bibitem[Goormaghtigh(1929)]{goormaghtigh29:_orthopole_theorem}
R.~Goormaghtigh.
\newblock A generalization of the orthopole theorem.
\newblock \emph{Amer. Math. Monthly}, 36\penalty0 (8):\penalty0 422--424, 1929.

\bibitem[Honsberger(1991)]{honsberger91:_mathemical_morsels}
R.~Honsberger.
\newblock \emph{More Mathematical Morsels}.
\newblock Math. Assoc. Amer., Washington, DC, 1991.

\bibitem[Mandan(1979{\natexlab{a}})]{mandan79:_orthologic_desargues_figure}
S.~R. Mandan.
\newblock Orthologic {Desargues'} figure.
\newblock \emph{J. Austral. Math. Soc. Ser. A}, 28:\penalty0 295--302,
  1979{\natexlab{a}}.

\bibitem[Mandan(1979{\natexlab{b}})]{mandan79:_semi_bilogic_bilogic_tetrahedra}
S.~R. Mandan.
\newblock Special pairs of semi-bilogic and bilogic tetrahedra.
\newblock \emph{J. Austral. Math. Soc. Ser. A}, 28:\penalty0 303--308,
  1979{\natexlab{b}}.

\bibitem[Neuberg(1884)]{neuberg84:_memoire_tetraedre}
J.~Neuberg.
\newblock \emph{Mémoire sur le tétraèdre}.
\newblock F. Hayez, Bruxelles, 1884.

\bibitem[Neuberg(1907)]{neuberg07:_orthologische_tetraeder}
J.~Neuberg.
\newblock {Über orthologische Tetraeder}.
\newblock \emph{Monatsh. Math.}, 18\penalty0 (1):\penalty0 212--218, 1907.

\bibitem[Servais(1923)]{servais23:_trois_tetraedres}
C.~Servais.
\newblock Un group de trois tétraèdres.
\newblock \emph{Acad. Roy. Belg. Bull. Cl. Sci. (5)}, 5\penalty0 (9):\penalty0
  49--54, 1923.

\bibitem[Steiner(1827)]{steiner27:_lehrsaetze}
J.~Steiner.
\newblock {Vorgelegte Lehrsätze}.
\newblock \emph{Crelle's Journal}, II:\penalty0 287--292, 1827.

\bibitem[Thebault(1952)]{thebault52:_perspective_orthologic_triangles}
V.~Thebault.
\newblock Perspective and orthologic triangles and tetrahedrons.
\newblock \emph{Amer. Math. Monthly}, 59\penalty0 (1):\penalty0 24--28, 1952.

\end{thebibliography}

\end{document}